\numberwithin{equation}{section}
\newcommand{\tr}{\mathop{\mathrm{tr}}\,}
\renewcommand{\det}{\mathop{\mathrm{det}}\,}
\theoremstyle{plain}
\newtheorem{thm}{Theorem}
\newtheorem{lem}[thm]{Lemma}
\theoremstyle{definition}
\newtheorem{exa}[thm]{Example}
\begin{document}
\title{A revisit to periodic continuants}
\author{Genki Shibukawa}
\date{
\small MSC classes\,:\,05E05, 11B39, 11J70, 33C45}
\pagestyle{plain}

\maketitle


\begin{abstract}
We give a simple proof of some explicit formulas of periodic continuants by Chebyshev polynomials of the second kind given by Rozsa. 
\end{abstract}

\section{Introduction}
Let $\mathbb{Z}_{\geq p}$ be a set of integers greater than or equal to $p$. 
For an integer $p$ and infinite complex sequences $\mathbf{a}_{p}:=(a_{m})_{m \in \mathbb{Z}_{\geq p}}$, $\mathbf{b}_{p}:=(b_{m})_{m \in \mathbb{Z}_{\geq p}}$ and $\mathbf{c}_{p}:=(c_{m})_{m \in \mathbb{Z}_{\geq p}}$, we put 
$$
\boldsymbol{\alpha }_{p}:=(\mathbf{a}_{p}, \mathbf{b}_{p}, \mathbf{c}_{p}).
$$
We define 
the (extended) continuant polynomials $K_{n}(\boldsymbol{\alpha }_{p})$ by  
$$
K_{-1}(\boldsymbol{\alpha }_{p})
   :=
   0, \quad 
K_{0}(\boldsymbol{\alpha }_{p})
   :=
   1, \quad 
K_{1}(\boldsymbol{\alpha }_{p})
   :=
   a_{p}, \quad 
K_{n}(\boldsymbol{\alpha }_{p})
   :=
   \det{T_{n}(\boldsymbol{\alpha }_{p})},
$$
where $T_{n}(\boldsymbol{\alpha }_{p})$ is the following $n\times n$ tridiagonal matrix:
$$
T_{n}(\boldsymbol{\alpha }_{p})
   =
   \begin{pmatrix}
   a_{p} & b_{p} & 0 & \cdots & 0 & 0 \\
   c_{p} & a_{p+1} & b_{p+1} & \cdots & 0 & 0 \\
   0 & c_{p+1} & a_{p+2} & \cdots & 0 & 0 \\
   \vdots & \vdots & \vdots & \ddots & \vdots & \vdots \\
   0 & 0 & 0 & \cdots & a_{p+n-2} & b_{p+n-2} \\
   0 & 0 & 0 & \cdots & c_{p+n-2} & a_{p+n-1} 
   \end{pmatrix}.
$$

There has been many research on continuant polynomials in relation to continued fraction and orthogonal polynomials. 
For continuant polynomials, several properties have been well known since Euler, especially regarding some explicit formulas \cite{T}. 
P. R\'{o}zsa \cite{R} considered for a positive integer $l$ the following $l$-periodicity condition for the sequences $\mathbf{a}_{p}, \mathbf{b}_{p}$ and $\mathbf{c}_{p}$
$$
a_{m+l}=a_{m}, \quad b_{m+l}=b_{m}, \quad c_{m+l}=c_{m}, 
$$
and proposed an explicit formula of $l$-periodic continuant $K_{n}(\boldsymbol{\alpha }_{p})$ by the Chebyshev polynomials of the second kind.

In this article, we give another proof of this explicit formula. 
While R\'{o}zsa's proof is based on direct calculations of the determinant of the definition for continuant polynomials, our proof uses only $2\times 2$ matrices. 
We also mention some examples of this explicit formula related to $q$-continued fractions \cite{MGO}.

\section{Preliminaries}
Throughout the paper, we denote the ring of rational integers by $\mathbb{Z}$. 
We set the Gauss hypergeometric function 
$$
{_{2}F_1}\left(\begin{matrix} a, b \\ c\end{matrix};x\right)
   :=
   \sum_{m\geq 0}\frac{(a)_{m}(b)_{m}}{m!(c)_{m}}x^{m}, \quad 
   (a)_{m}
   :=
   \begin{cases}
   a(a+1)\cdots (a+m-1) & (m\not=0) \\
   1 & (m=0)
   \end{cases}.
$$
Chebyshev polynomial of the second kind is defined by 
\begin{align}
U_{n}(x)
   :=&
   (n+1){_{2}F_1}\left(\begin{matrix} -n, n+2 \\ \frac{3}{2}\end{matrix};\frac{1-x}{2}\right)
   =
      \sum_{k=0}^{n}
   \binom{n}{k}
   \frac{(n+1)_{k+1}}{\left(\frac{3}{2}\right)_{k}}
   \left(\frac{1-x}{2}\right)^{k}, \nonumber \\
\binom{n}{k}
   :=&
   \begin{cases}
   \frac{n(n-1)\cdots (n-k+1)}{k!} & (k\not=0)\\
   1 & (k=0)
   \end{cases}. \nonumber 
\end{align}
It should be remarked that by the definition of $U_{n}(x)$ we have 
$$
U_{0}(x)=1, \quad U_{-1}(x)=0, \quad U_{-2}(x)=-1. 
$$
The generating function for the $U_{n}(x)$ is
\begin{equation}
\label{eq:gen fnc of second Chebyshev}
\frac{1}{1-2xu+u^{2}}
   =
   \sum_{n\geq 0}
   U_{n}(x)u^{n}.
\end{equation}

Let $h_{n}(x,y)$ denote the bivariate complete homogeneous symmetric polynomials of degree $n$ 
$$
h_{n}(x,y)
   :=
   \sum_{i+j=n}x^{i}y^{j}
   =
   \frac{x^{n+1}-y^{n+1}}{x-y}. 
$$
The generating function for the $h_{n}(x,y)$ is
\begin{equation}
\label{eq:gen fnc of comp hom}
\frac{1}{(1-xu)(1-yu)}
   =
   \sum_{n\geq 0}
      h_{n}(x,y)u^{n}.
\end{equation}
By (\ref{eq:gen fnc of second Chebyshev}) and (\ref{eq:gen fnc of comp hom}) we have 
\begin{align}
h_{n}(x,y)
   =
   \begin{cases}
   (xy)^{\frac{n}{2}}U_{n}\left(\frac{x+y}{2\sqrt{xy}}\right) & (xy\not=0) \\
   (x+y)^{n} & (xy=0)
   \end{cases}.
\end{align}

\begin{lem}
Let $A$ be a complex matrix
$$
A
   :=
   \begin{pmatrix}
   a & b \\
   c & d
   \end{pmatrix} 
$$
and $E_{2}$ be the $2\times 2$ identity matrix. 
For any nonnegative integer $m$, we have 
\begin{align}
A^{m}
   &=
   h_{m-1}(\rho _{+},\rho _{-})A-h_{m-2}(\rho _{+},\rho _{-})(\det{A})E_{2} \nonumber \\
\label{eq:power of 2 times 2}
   &=
   \begin{cases}
   (\det{A})^{\frac{m-1}{2}}U_{m-1}\left(\frac{\tr{A}}{2\sqrt{\det{A}}}\right)A
   -(\det{A})^{\frac{m}{2}}U_{m-2}\left(\frac{\tr{A}}{2\sqrt{\det{A}}}\right)E_{2} & (\det{A}\not=0) \\
   (\tr{A})^{m-1}A & (\det{A}=0)
   \end{cases}.\\
\end{align}
Here $\rho _{+}$ and $\rho _{-}$ are the roots of the characteristic polynomial $\det{(\lambda E_{2}-A)}$. 
\end{lem}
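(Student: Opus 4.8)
The plan is to avoid any direct manipulation of the $n\times n$ determinant and instead encode all powers $A^m$ at once through the matrix-valued generating function $\sum_{m\geq 0}A^m u^m=(E_2-uA)^{-1}$, reducing everything to the two scalar generating functions already recorded in (\ref{eq:gen fnc of comp hom}). The point is that the Cayley--Hamilton theorem collapses every power of a $2\times 2$ matrix into the span of $A$ and $E_2$, so the whole statement is really an identity about the coefficients $h_{m-1}(\rho_+,\rho_-)$ and $h_{m-2}(\rho_+,\rho_-)$.

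First I would compute the resolvent explicitly. Writing $E_2-uA=\left(\begin{smallmatrix} 1-ua & -ub \\ -uc & 1-ud\end{smallmatrix}\right)$, its determinant factors as $\det(E_2-uA)=1-u\,\tr A+u^2\det A=(1-\rho_+u)(1-\rho_-u)$, since $\rho_++\rho_-=\tr A$ and $\rho_+\rho_-=\det A$. A one-line check of the $2\times 2$ adjugate gives the clean identity $\mathrm{adj}(E_2-uA)=(1-u\,\tr A)E_2+uA$. Hence
\[
\sum_{m\geq 0}A^m u^m=(E_2-uA)^{-1}=\bigl((1-u\,\tr A)E_2+uA\bigr)\sum_{n\geq 0}h_n(\rho_+,\rho_-)u^n,
\]
where the last factor is exactly (\ref{eq:gen fnc of comp hom}) with $x=\rho_+$, $y=\rho_-$.

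Next I would read off the coefficient of $u^m$ on both sides. The term $uA$ contributes $h_{m-1}(\rho_+,\rho_-)A$, and the term $(1-u\,\tr A)E_2$ contributes $\bigl(h_m(\rho_+,\rho_-)-(\tr A)h_{m-1}(\rho_+,\rho_-)\bigr)E_2$. To match the claimed formula I invoke the three-term recurrence for $h_n$, namely $h_m=(\tr A)h_{m-1}-(\det A)h_{m-2}$, which is itself just the statement that $(1-u\,\tr A+u^2\det A)\sum_n h_n u^n=1$; this turns the $E_2$-coefficient into $-(\det A)h_{m-2}(\rho_+,\rho_-)$ and yields the first displayed equality. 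The second equality is then purely a matter of substituting the conversion $h_n(x,y)=(xy)^{n/2}U_n\bigl(\tfrac{x+y}{2\sqrt{xy}}\bigr)$ when $\det A=\rho_+\rho_-\neq 0$, and $h_n(x,y)=(x+y)^n$ when $\det A=0$; in the degenerate case one eigenvalue vanishes, so $h_{m-1}=(\tr A)^{m-1}$ and the $E_2$ term drops out, leaving $A^m=(\tr A)^{m-1}A$.

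The one point requiring care, rather than a genuine obstacle, is the behaviour at small $m$: the recurrence $h_m=(\tr A)h_{m-1}-(\det A)h_{m-2}$ is only automatic for $m\geq 2$, so the cases $m=0,1$ must be matched by hand. These are precisely what the conventions $U_{-1}(x)=0$ and $U_{-2}(x)=-1$ recorded in the preliminaries are designed to enforce, corresponding to $h_{-1}=0$ and $h_{-2}=-1/\det A$; with these the single formula holds uniformly for all $m\geq 0$, and one checks $m=0$ against $A^0=E_2$ and $m=1$ against $A^1=A$ directly. An equivalent route, should one prefer to avoid generating functions entirely, is a short induction: Cayley--Hamilton gives $A^{m+1}=(\tr A)A^m-(\det A)A^{m-1}$, and the coefficients of $A$ and $E_2$ then satisfy the same recurrence as $h_{m-1}$ and $-(\det A)h_{m-2}$, with identical base cases.
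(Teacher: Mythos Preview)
Your argument is correct. The resolvent identity $(E_2-uA)^{-1}=\sum_{m\geq 0}A^mu^m$ is valid as a formal power series, your adjugate computation $\mathrm{adj}(E_2-uA)=(1-u\,\tr A)E_2+uA$ is right, and the coefficient extraction together with the three-term recurrence for $h_n$ gives exactly the claimed formula; your treatment of the boundary cases $m=0,1$ via the conventions $h_{-1}=0$, $h_{-2}(\rho_+,\rho_-)=-1/(\rho_+\rho_-)$ (which is indeed consistent with $h_n(x,y)=(x^{n+1}-y^{n+1})/(x-y)$) is also fine.

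However, the paper takes a different and somewhat shorter route: it performs the Euclidean division $\lambda^m=q(\lambda)(\lambda-\rho_+)(\lambda-\rho_-)+c_1\lambda+c_0$, reads off $c_1,c_0$ by substituting $\lambda=\rho_\pm$, and then applies Cayley--Hamilton to replace $\lambda$ by $A$. This avoids generating functions entirely and delivers the coefficients $c_1=h_{m-1}(\rho_+,\rho_-)$, $c_0=-h_{m-2}(\rho_+,\rho_-)\det A$ in one step from the two linear equations $\rho_\pm^m=c_1\rho_\pm+c_0$. Your approach has the virtue of tying the result directly to the generating-function identities~(\ref{eq:gen fnc of second Chebyshev}) and~(\ref{eq:gen fnc of comp hom}) already set up in the preliminaries, making the appearance of $h_n$ and $U_n$ look inevitable rather than ad hoc; the paper's approach is more self-contained and needs no bookkeeping at $m=0,1$. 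The inductive alternative you sketch at the end is a third valid path, closest in spirit to the paper's.
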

\begin{proof}
We consider Euclidean division for $\lambda ^{m}$ and $\det{(\lambda E_{2}-A)}=(\lambda -\rho _{+})(\lambda -\rho _{-})$. 
By the Euclidean theorem for division of polynomials, there exist unique polynomial $q(\lambda )$ and two constants $c_{1},c_{0}$ such that 
\begin{align}
\label{eq:Euclidean division}
\lambda ^{m}
   =
   q(\lambda )(\lambda -\rho _{+})(\lambda -\rho _{-})+c_{1}\lambda +c_{0}.
\end{align}
By substituting $\rho _{\pm }$ for $\lambda $ in (\ref{eq:Euclidean division}), we have
$$
\rho _{\pm }^{m}
   =
   c_{1}\rho _{\pm}+c_{0}.
$$
Hence we obtain 
\begin{align}
c_{1}
   &=
   \frac{\rho _{+}^{m}-\rho _{-}^{m}}{\rho _{+}-\rho _{-}}
   =
   h_{m-1}(\rho _{+},\rho _{-}), \nonumber \\
c_{0}
   &=
   -\frac{\rho _{+}^{m}\rho _{-}-\rho _{+}\rho _{-}^{m}}{\rho _{+}-\rho _{-}}
   =
   -h_{m-2}(\rho _{+},\rho _{-})\det{A}. \nonumber 
\end{align}
We remark that these expressions hold even if the case of $\rho _{+}=\rho _{-}$. 
\end{proof}
\begin{exa}[Power of a quaternion]
Let $a,b,c,d$ be real numbers. 
We define $2\times 2$ matrices $Q$, $I$, $J$ and $K$ by 
\begin{align}
& Q
   =
   Q(a,b,c,d)
   =
   aE_{2}+bI+cJ+dK
   =
   \begin{pmatrix}
   a+b\sqrt{-1} & c+d\sqrt{-1} \\
   -c+d\sqrt{-1} & a-b\sqrt{-1}
   \end{pmatrix}
   \not=
   \begin{pmatrix}
   0 & 0 \\
   0 & 0
   \end{pmatrix},
 \nonumber \\
& I:=
   \begin{pmatrix}
   \sqrt{-1} & 0 \\
   0 & -\sqrt{-1}
   \end{pmatrix}, \quad 
   J:=
   \begin{pmatrix}
   0 & 1 \\
   -1 & 0
   \end{pmatrix}, \quad 
   K:=
   \begin{pmatrix}
   0 & \sqrt{-1} \\
   \sqrt{-1} & 0
   \end{pmatrix}, \nonumber 
\end{align}
which is a matrix realization of a quaternion: 
$$
q=a+bi+cj+dk \in 
\mathbb{H}
   :=
   \{a+bi+cj+dk \mid a,b,c,d \in \mathbb{R}, i^{2}=j^{2}=k^{2}=ijk=-1\}.
$$
By substituting
$$
\det{Q}
   =
   a^{2}+b^{2}+c^{2}+d^{2}
   =:
   |Q|^{2}, \quad 
\tr{Q}
   =
   2a
$$
and $A=Q$ in (\ref{eq:power of 2 times 2}), we have
\begin{align}
Q^{n}
   &=
   |Q|^{n-1}U_{n-1}\left(\frac{a}{|Q|}\right)Q 
   -|Q|^{n}U_{n-2}\left(\frac{a}{|Q|}\right)E_{2} \nonumber \\
   &=
   |Q|^{n}\left(\frac{a}{|Q|}U_{n-1}\left(\frac{a}{|Q|}\right)-U_{n-2}\left(\frac{a}{|Q|}\right)\right)E_{2} \nonumber \\
   & \quad +
   |Q|^{n-1}U_{n-1}\left(\frac{a}{|Q|}\right)(bI+cJ+dK) \nonumber \\
   &=
   \frac{|Q|^{n}}{2}\left(U_{n}\left(\frac{a}{|Q|}\right)-U_{n-2}\left(\frac{a}{|Q|}\right)\right)E_{2} \nonumber \\
   & \quad +
   |Q|^{n-1}U_{n-1}\left(\frac{a}{|Q|}\right)(bI+cJ+dK). \nonumber
\end{align}
The last equality follows from the Pieri formula for $U_{n}(x)$: 
$$
2xU_{n}(x)=U_{n+1}(x)+U_{n-1}(x).
$$
If we put 
$$
Q(a,b,c,d)^{n}
   =
   A_{n}(a,b,c,d)E_{2}+B_{n}(a,b,c,d)I+C_{n}(a,b,c,d)J+D_{n}(a,b,c,d)K,
$$
then we obtain
\begin{align}
A_{n}(a,b,c,d)
   &=
   \frac{|Q|^{n}}{2}\left(U_{n}\left(\frac{a}{|Q|}\right)-U_{n-2}\left(\frac{a}{|Q|}\right)\right), \nonumber \\
B_{n}(a,b,c,d)
   &=
   b|Q|^{n-1}U_{n-1}\left(\frac{a}{|Q|}\right), \nonumber \\
C_{n}(a,b,c,d)
   &=
   c|Q|^{n-1}U_{n-1}\left(\frac{a}{|Q|}\right), \nonumber \\
D_{n}(a,b,c,d)
   &=
   d|Q|^{n-1}U_{n-1}\left(\frac{a}{|Q|}\right). \nonumber
\end{align}
\end{exa}
\begin{lem}[Fundamental properties of continuant polynomials]
\label{thm:Key lemma}
{\rm{(1)}}
\begin{align}
\label{eq:rec of continuant}
K_{-1}(\boldsymbol{\alpha }_{p})
   :=0, \quad 
K_{0}(\boldsymbol{\alpha }_{p})
   :=1, \quad
K_{n}(\boldsymbol{\alpha }_{p})
   =
   a_{p}K_{n-1}(\boldsymbol{\alpha }_{p+1})
   -b_{p}c_{p}K_{n-2}(\boldsymbol{\alpha }_{p+2}). 
\end{align}
{\rm{(2)}} Let 
\begin{align}
L(\alpha ,\beta )
   &:=
   \begin{pmatrix}
   \alpha & \beta \\
   1 & 0
   \end{pmatrix}, \nonumber \\
\label{eq:def of A}
A_{n}(\boldsymbol{\alpha }_{p})
   &:=
   L(a_{p},-b_{p}c_{p})
   L(a_{p+1},-b_{p+1}c_{p+1})
   \cdots 
   L(a_{p+n-1},-b_{p+n-1}c_{p+n-1}).
\end{align}
We have
\begin{align}
\label{eq:exp formula of A}
A_{n}(\boldsymbol{\alpha }_{p})
   =\begin{pmatrix}
K_{n}(\boldsymbol{\alpha }_{p}) & -b_{p+n-1}c_{p+n-1}K_{n-1}(\boldsymbol{\alpha }_{p}) \\
K_{n-1}(\boldsymbol{\alpha }_{p+1}) & -b_{p+n-1}c_{p+n-1}K_{n-2}(\boldsymbol{\alpha }_{p+1})
\end{pmatrix}.
\end{align}
Especially
\begin{align}
\label{eq:tr and det}
\tr{A_{n}(\boldsymbol{\alpha }_{p})}
   =
   K_{n}(\boldsymbol{\alpha }_{p})
   -b_{p+n-1}c_{p+n-1}K_{n-2}(\boldsymbol{\alpha }_{p+1}), \quad 
\det{A_{n}(\boldsymbol{\alpha }_{p})}
   =
   \prod_{j=1}^{n}b_{p+j-1}c_{p+j-1}.
\end{align}
{\rm{(3)}} Put
$$
\mathbf{k}_{n+1}(\boldsymbol{\alpha }_{p})
   :=
   \begin{pmatrix}
   K_{n+1}(\boldsymbol{\alpha }_{p}) \\
   K_{n}(\boldsymbol{\alpha }_{p+1})
   \end{pmatrix}. 
$$
For any integer $m$ such that $n\geq m$, we have 
\begin{align}
\label{eq:n shift op}
\mathbf{k}_{n+1}(\boldsymbol{\alpha }_{p})
   =
   A_{m}(\boldsymbol{\alpha }_{p})\mathbf{k}_{n+1-m}(\boldsymbol{\alpha }_{p+m}).
\end{align}
{\rm{(4)}} If for any integer $n$ 
$
c_{n}=-1,
$
then we have 
\begin{align}
\label{eq:cont frac and continuant}
a_{p} + \underset{i=1}{\overset{n-1}{\mathrm K}} \frac{b_{p+i-1}}{a_{p+i}}
   &=
   \frac{K_{n}(\boldsymbol{\alpha }_{p})}{K_{n-1}(\boldsymbol{\alpha }_{p+1})},
\end{align}
where
$$
a_{p} + \underset{i=1}{\overset{n-1}{\mathrm K}} \frac{b_{p+i-1}}{a_{p+i}}
   :=
   a_{p}+\cfrac{b_{p}}{a_{p+1}+\cfrac{b_{p+1}}{a_{p+2}+\cfrac{b_{p+2}}{\ddots  \cfrac{}{a_{p+n-2}+\frac{b_{p+n-2}}{a_{p+n-1}}}}}}.
$$
\end{lem}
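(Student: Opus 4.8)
The plan is to establish (1) by a determinant expansion and then obtain (2)--(4) as short inductions all driven by that single recurrence. For (1), I would expand $\det T_n(\boldsymbol{\alpha}_p)$ along its first row. The $(1,1)$ cofactor is the minor obtained by deleting the first row and column, which is precisely $T_{n-1}(\boldsymbol{\alpha}_{p+1})$, contributing $a_pK_{n-1}(\boldsymbol{\alpha}_{p+1})$. The entry $b_p$ in position $(1,2)$ multiplies, with sign $-1$, a minor whose first column is $(c_p,0,\dots,0)^{\mathsf T}$; a second expansion along that column leaves $T_{n-2}(\boldsymbol{\alpha}_{p+2})$, so its contribution is $-b_pc_pK_{n-2}(\boldsymbol{\alpha}_{p+2})$. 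Together with the initial values $K_{-1}=0$, $K_0=1$, $K_1=a_p$ read off from the definition, this gives (\ref{eq:rec of continuant}).

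For (2), I would induct on $n$ by peeling off the leftmost factor, using $A_n(\boldsymbol{\alpha}_p)=L(a_p,-b_pc_p)\,A_{n-1}(\boldsymbol{\alpha}_{p+1})$. The base case $n=1$ is immediate from the initial values. In the inductive step I would write out the hypothesised form of $A_{n-1}(\boldsymbol{\alpha}_{p+1})$ in full—its last factor carries the index $p+n-1$, which is exactly why the common scalar $b_{p+n-1}c_{p+n-1}$ appears in both of its columns—and multiply the two $2\times 2$ matrices. Each entry then collapses to the asserted continuant after one application of (\ref{eq:rec of continuant}): the $(1,1)$ entry via the recurrence at length $n$, the $(1,2)$ entry via the recurrence at length $n-1$, and the bottom row is copied unchanged. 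The trace formula is the sum of the two diagonal entries, and the determinant formula follows from multiplicativity of $\det$ together with $\det L(\alpha,\beta)=-\beta$, which yields $\det A_n(\boldsymbol{\alpha}_p)=\prod_{j=1}^n b_{p+j-1}c_{p+j-1}$.

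For (3), the engine is the one-step identity $\mathbf{k}_{n+1}(\boldsymbol{\alpha}_p)=L(a_p,-b_pc_p)\,\mathbf{k}_n(\boldsymbol{\alpha}_{p+1})$: multiplying out the right-hand side gives a vector whose top entry is $a_pK_n(\boldsymbol{\alpha}_{p+1})-b_pc_pK_{n-1}(\boldsymbol{\alpha}_{p+2})=K_{n+1}(\boldsymbol{\alpha}_p)$ by (\ref{eq:rec of continuant}) and whose bottom entry is $K_n(\boldsymbol{\alpha}_{p+1})$. Iterating this $m$ times—the hypothesis $n\geq m$ keeping every index in range—and collecting the accumulated product of $L$-matrices into $A_m(\boldsymbol{\alpha}_p)$ yields (\ref{eq:n shift op}). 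For (4), the substitution $c_n=-1$ turns $-b_pc_p$ into $b_p$; dividing the two components of that same one-step identity shows that $f_n(p):=K_n(\boldsymbol{\alpha}_p)/K_{n-1}(\boldsymbol{\alpha}_{p+1})$ satisfies $f_n(p)=a_p+b_p/f_{n-1}(p+1)$, which is exactly the nesting rule defining the finite continued fraction in (\ref{eq:cont frac and continuant}). An induction on $n$ with base case $f_1(p)=a_p/1=a_p$ then finishes.

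None of these steps is deep; the only place demanding genuine care is the index bookkeeping in (2), namely keeping the shift $\boldsymbol{\alpha}_p\mapsto\boldsymbol{\alpha}_{p+1}$ synchronised with the scalar $b_{p+n-1}c_{p+n-1}$ shared by both columns. I would therefore write the inductive hypothesis with all indices displayed explicitly before multiplying, so that the off-by-one relations are checked by inspection rather than guessed.
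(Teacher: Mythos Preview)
Your proposal is correct and follows essentially the same approach as the paper: a first-row Laplace expansion for (1), left-factor induction $A_n(\boldsymbol{\alpha}_p)=L(a_p,-b_pc_p)A_{n-1}(\boldsymbol{\alpha}_{p+1})$ for (2), iteration of the one-step vector identity for (3), and an induction on $n$ driven by the recurrence for (4). The paper's own proof is terser (for (1) and (4) it simply says ``follows from the definition'' and ``follows from (\ref{eq:rec of continuant}) and induction''), but your expanded account matches it step for step.
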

\begin{proof}
{\rm{(1)}} It follows from the definition of $K_{n}(\boldsymbol{\alpha }_{p})$. \\
{\rm{(2)}} When $n=1$, (\ref{eq:exp formula of A}) holds. 
Assume the result true for $n$. 
From induction on $n$ and (\ref{eq:rec of continuant}), we have 
\begin{align}
A_{n+1}(\boldsymbol{\alpha }_{p})
   &=
   L(a_{p},-b_{p}c_{p})
   A_{n}(\boldsymbol{\alpha }_{p+1}) \nonumber \\
   &=
\begin{pmatrix}
a_{p} & -b_{p}c_{p} \\
1 & 0
\end{pmatrix}
\begin{pmatrix}
K_{n}(\boldsymbol{\alpha }_{p+1}) & -b_{p+n}c_{p+n}K_{n-1}(\boldsymbol{\alpha }_{p+1}) \\
K_{n-1}(\boldsymbol{\alpha }_{p+2}) & -b_{p+n}c_{p+n}K_{n-2}(\boldsymbol{\alpha }_{p+2})
\end{pmatrix} \nonumber \\
   &=
\begin{pmatrix}
a_{p}K_{n}(\boldsymbol{\alpha }_{p+1})-b_{p}c_{p}K_{n-1}(\boldsymbol{\alpha }_{p+2}) & -b_{p+n}c_{p+n}(a_{p}K_{n-1}(\boldsymbol{\alpha }_{p+1})-b_{p}c_{p}K_{n-2}(\boldsymbol{\alpha }_{p+2})) \\
K_{n}(\boldsymbol{\alpha }_{p+1}) & -b_{p+n}c_{p+n}K_{n-1}(\boldsymbol{\alpha }_{p+1})
\end{pmatrix} \nonumber \\
   &=
\begin{pmatrix}
K_{n+1}(\boldsymbol{\alpha }_{p+1}) & -b_{p+n}c_{p+n}K_{n}(\boldsymbol{\alpha }_{p+1}) \\
K_{n}(\boldsymbol{\alpha }_{p+1}) & -b_{p+n}c_{p+n}K_{n-1}(\boldsymbol{\alpha }_{p+1})
\end{pmatrix}. \nonumber
\end{align}
{\rm{(3)}} By the definition of $K_{n}(\boldsymbol{\alpha }_{p})$ and (\ref{eq:rec of continuant}), 
\begin{align}
\mathbf{k}_{n+1}(\boldsymbol{\alpha }_{p})
   &=
   \begin{pmatrix}
   K_{n+1}(\boldsymbol{\alpha }_{p}) \\
   K_{n}(\boldsymbol{\alpha }_{p+1})
   \end{pmatrix}
   =
   \begin{pmatrix}
   a_{p}K_{n}(\boldsymbol{\alpha }_{p+1})-b_{p}c_{p}K_{n-1}(\boldsymbol{\alpha }_{p+2}) \\
   K_{n}(\boldsymbol{\alpha }_{p+1})
   \end{pmatrix}
   =
   L(a_{p},-b_{p}c_{p})
   \mathbf{k}_{n}(\boldsymbol{\alpha }_{p+1}). \nonumber 
\end{align}
Hence
\begin{align}
\mathbf{k}_{n+1}(\boldsymbol{\alpha }_{p})
   &=
   L(a_{p},-b_{p}c_{p})
   L(a_{p+1},-b_{p+1}c_{p+1})
   \cdots 
   L(a_{p+m-1},-b_{p+m-1}c_{p+m-1})
   \mathbf{k}_{n+1-m}(\boldsymbol{\alpha }_{p+m}) \nonumber \\
   &=
   A_{m}(\boldsymbol{\alpha }_{p})\mathbf{k}_{n+1-m}(\boldsymbol{\alpha }_{p+m}). \nonumber 
\end{align}
{\rm{(4)}} It follows from (\ref{eq:rec of continuant}) and induction on $n$. 
\end{proof}

\section{Main results}
Under the following we assume 
$$
a_{p+l}=a_{p}, \quad b_{p+l}=b_{p}, \quad c_{p+l}=c_{p} \quad (p \in \mathbb{Z}),
$$
that is to say 
\begin{align}
\label{boldalpha periodicity}
\boldsymbol{\alpha }_{p+l}=\boldsymbol{\alpha }_{p}. 
\end{align}
\begin{thm}
\label{thm:main results}
For any nonnegative integer $m$, we obtain
\begin{align}
\label{eq:main results}
K_{lm}(\boldsymbol{\alpha }_{p})
   &=
   \begin{cases}
   (\det{A_{l}(\boldsymbol{\alpha }_{p})})^{\frac{m-1}{2}}U_{m-1}\left(\frac{\tr{A_{l}(\boldsymbol{\alpha }_{p})}}{2\sqrt{\det{A_{l}(\boldsymbol{\alpha }_{p})}}}\right)K_{l}(\boldsymbol{\alpha }_{p}) & \nonumber \\
   \quad -(\det{A_{l}(\boldsymbol{\alpha }_{p})})^{\frac{m}{2}}U_{m-2}\left(\frac{\tr{A_{l}(\boldsymbol{\alpha }_{p})}}{2\sqrt{\det{A_{l}(\boldsymbol{\alpha }_{p})}}}\right) & (\det{A_{l}(\boldsymbol{\alpha }_{p})}\not=0) \\
   (\tr{A_{l}(\boldsymbol{\alpha }_{p})})^{m-1}K_{l}(\boldsymbol{\alpha }_{p}) & (\det{A_{l}(\boldsymbol{\alpha }_{p})}=0)
   \end{cases}, \\
K_{lm-1}(\boldsymbol{\alpha }_{p+1})
   &=
   \begin{cases}
   (\det{A_{l}(\boldsymbol{\alpha }_{p})})^{\frac{m-1}{2}}U_{m-1}\left(\frac{\tr{A_{l}(\boldsymbol{\alpha }_{p})}}{2\sqrt{\det{A_{l}(\boldsymbol{\alpha }_{p})}}}\right)K_{l-1}(\boldsymbol{\alpha }_{p+1}) & (\det{A_{l}(\boldsymbol{\alpha }_{p})}\not=0) \\
   (\tr{A_{l}(\boldsymbol{\alpha }_{p})})^{m-1}K_{l-1}(\boldsymbol{\alpha }_{p+1}) & (\det{A_{l}(\boldsymbol{\alpha }_{p})}=0)
   \end{cases}.
\end{align}
\end{thm}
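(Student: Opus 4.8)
The plan is to combine the shift operator identity from Lemma~\ref{thm:Key lemma}(3) with the explicit power formula from the opening Lemma on $2\times 2$ matrices. The key observation is that periodicity \eqref{boldalpha periodicity} makes the matrix $A_l(\boldsymbol{\alpha}_p)$ depend only on the residue of $p$, so iterating the block shift produces a genuine matrix power. Concretely, I would first note that for the vector $\mathbf{k}_{lm}(\boldsymbol{\alpha}_p)$, repeated application of \eqref{eq:n shift op} with step $m=l$ gives
\begin{align}
\mathbf{k}_{lm}(\boldsymbol{\alpha}_p)
   =
   A_l(\boldsymbol{\alpha}_p)\,\mathbf{k}_{l(m-1)}(\boldsymbol{\alpha}_{p+l})
   =
   A_l(\boldsymbol{\alpha}_p)\,\mathbf{k}_{l(m-1)}(\boldsymbol{\alpha}_{p}), \nonumber
\end{align}
where the second equality uses $\boldsymbol{\alpha}_{p+l}=\boldsymbol{\alpha}_p$. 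Unwinding this recursion down to $\mathbf{k}_{l}(\boldsymbol{\alpha}_p)$ yields $\mathbf{k}_{lm}(\boldsymbol{\alpha}_p) = A_l(\boldsymbol{\alpha}_p)^{m-1}\,\mathbf{k}_{l}(\boldsymbol{\alpha}_p)$.

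Next I would apply the power formula \eqref{eq:power of 2 times 2} to the matrix $A=A_l(\boldsymbol{\alpha}_p)$ with exponent $m-1$, reading off its trace and determinant directly from \eqref{eq:tr and det}. Substituting into the expression for $A_l(\boldsymbol{\alpha}_p)^{m-1}$ and then acting on $\mathbf{k}_{l}(\boldsymbol{\alpha}_p)$ produces a linear combination of $A_l(\boldsymbol{\alpha}_p)\,\mathbf{k}_{l}(\boldsymbol{\alpha}_p)$ and $\mathbf{k}_{l}(\boldsymbol{\alpha}_p)$ itself. Since $A_l(\boldsymbol{\alpha}_p)\,\mathbf{k}_l(\boldsymbol{\alpha}_p) = \mathbf{k}_{l+1}(\boldsymbol{\alpha}_p)$ is again just a shifted $\mathbf{k}$-vector (or, more cleanly, $\mathbf{k}_{lm}(\boldsymbol{\alpha}_p)=A_l^{m-1}\mathbf{k}_l$ with $A_l^{m-1}$ expanded as $h_{m-2}(\rho_+,\rho_-)A_l - h_{m-3}(\rho_+,\rho_-)(\det A_l)E_2$), the top and bottom components of the resulting vector equation give exactly the two claimed formulas for $K_{lm}(\boldsymbol{\alpha}_p)$ and $K_{lm-1}(\boldsymbol{\alpha}_{p+1})$ after rewriting $h$ in terms of $U$ via the identity $h_n(x,y)=(xy)^{n/2}U_n\!\left(\tfrac{x+y}{2\sqrt{xy}}\right)$.

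The one point that needs care, rather than a genuine obstacle, is the bookkeeping of indices and the degenerate case $\det A_l(\boldsymbol{\alpha}_p)=0$. In the nonzero-determinant branch I must match the half-integer powers of $\det A_l$ and the arguments $m-1,\,m-2$ of $U$ against the components $K_{lm}$ and $K_{lm-1}$ precisely; here the entries of $A_l(\boldsymbol{\alpha}_p)$ from \eqref{eq:exp formula of A} and the top/bottom split of $\mathbf{k}_l(\boldsymbol{\alpha}_p)=(K_l(\boldsymbol{\alpha}_p),K_{l-1}(\boldsymbol{\alpha}_{p+1}))^{\!\top}$ must be tracked so that the coefficient of $K_l(\boldsymbol{\alpha}_p)$ in the first formula and of $K_{l-1}(\boldsymbol{\alpha}_{p+1})$ in the second come out correctly. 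In the degenerate branch, the Lemma already supplies $A^m=(\tr A)^{m-1}A$, so the same vector equation collapses to the stated $(\tr A_l)^{m-1}$ expressions; I would simply remark that both branches follow uniformly from \eqref{eq:power of 2 times 2} applied to $\mathbf{k}_l(\boldsymbol{\alpha}_p)$, with the base cases $m=0,1$ checked against the conventions $U_{-1}=0$, $U_{-2}=-1$, $U_0=1$ recorded earlier.
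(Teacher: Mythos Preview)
Your overall strategy is exactly the paper's: combine the shift identity \eqref{eq:n shift op} with $l$-periodicity to turn $\mathbf{k}_{lm}(\boldsymbol{\alpha}_p)$ into a power of $A_l(\boldsymbol{\alpha}_p)$ acting on a fixed vector, then invoke the $2\times2$ power formula \eqref{eq:power of 2 times 2}. The idea is correct.

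However, your execution has an off-by-one slip that prevents the formulas from dropping out as claimed. You stop the recursion at $\mathbf{k}_l(\boldsymbol{\alpha}_p)$ and write $\mathbf{k}_{lm}=A_l^{\,m-1}\mathbf{k}_l$; expanding $A_l^{\,m-1}=h_{m-2}A_l-h_{m-3}(\det A_l)E_2$ then forces the term $A_l\mathbf{k}_l$ to appear. But $A_l(\boldsymbol{\alpha}_p)\mathbf{k}_l(\boldsymbol{\alpha}_p)=\mathbf{k}_{2l}(\boldsymbol{\alpha}_p)$, not $\mathbf{k}_{l+1}(\boldsymbol{\alpha}_p)$ as you write, so the components involve $K_{2l}$ and $K_{2l-1}(\boldsymbol{\alpha}_{p+1})$ rather than $K_l$ and $K_{l-1}(\boldsymbol{\alpha}_{p+1})$. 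You would then need a further Cayley--Hamilton step (equivalently the three-term recursion $h_{m-1}=(\operatorname{tr}A_l)h_{m-2}-(\det A_l)h_{m-3}$) to recover the indices $U_{m-1},U_{m-2}$ in the statement.

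The paper avoids this by iterating one step further, all the way down to
\[
\mathbf{k}_{lm}(\boldsymbol{\alpha}_p)=A_l(\boldsymbol{\alpha}_p)^{m}\,\mathbf{k}_0(\boldsymbol{\alpha}_p),
\qquad
\mathbf{k}_0(\boldsymbol{\alpha}_p)=\begin{pmatrix}1\\0\end{pmatrix}.
\]
Now $A_l^{\,m}\mathbf{k}_0$ is simply the first column of $A_l^{\,m}$, and \eqref{eq:power of 2 times 2} with exponent $m$ gives that column as $h_{m-1}(\rho_+,\rho_-)$ times the first column of $A_l$ (which by \eqref{eq:exp formula of A} is $(K_l(\boldsymbol{\alpha}_p),\,K_{l-1}(\boldsymbol{\alpha}_{p+1}))^{\top}$) minus $h_{m-2}(\rho_+,\rho_-)(\det A_l)(1,0)^{\top}$. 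Reading off the two entries yields the theorem immediately, with the correct Chebyshev indices $m-1$ and $m-2$ and no extra identity needed. Your handling of the degenerate case $\det A_l=0$ and of the base cases via $U_{-1}=0$, $U_{-2}=-1$ is fine.
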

\begin{proof}
By (\ref{eq:n shift op}) and periodicity (\ref{boldalpha periodicity})
\begin{align}
\mathbf{k}_{lm}(\boldsymbol{\alpha }_{p})
   &=
   A_{l}(\boldsymbol{\alpha }_{p})\mathbf{k}_{l(m-1)}(\boldsymbol{\alpha }_{p+l})
   =
   A_{l}(\boldsymbol{\alpha }_{p})\mathbf{k}_{l(m-1)}(\boldsymbol{\alpha }_{p}). \nonumber 
\end{align}
Then we have
$$
\mathbf{k}_{lm}(\boldsymbol{\alpha }_{p})
   =
   A_{l}(\boldsymbol{\alpha }_{p})^{m}\mathbf{k}_{0}(\boldsymbol{\alpha }_{p}). 
$$
When 
$$
\det{A_{l}(\boldsymbol{\alpha }_{p})}
   =\prod_{j=1}^{l}b_{p+j-1}c_{p+j-1}\not=0, 
$$
from (\ref{eq:power of 2 times 2}) we have
\begin{align}
A_{l}(\boldsymbol{\alpha }_{p})^{m}
   &=
   (\det{A_{l}(\boldsymbol{\alpha }_{p})})^{\frac{m-1}{2}}U_{m-1}\left(\frac{\tr{A_{l}(\boldsymbol{\alpha }_{p})}}{2\sqrt{\det{A_{l}(\boldsymbol{\alpha }_{p})}}}\right)A_{l}(\boldsymbol{\alpha }_{p}) \nonumber \\
   & \quad -
   (\det{A_{l}(\boldsymbol{\alpha }_{p})})^{\frac{m}{2}}U_{m-2}\left(\frac{\tr{A_{l}(\boldsymbol{\alpha }_{p})}}{2\sqrt{\det{A_{l}(\boldsymbol{\alpha }_{p})}}}\right)E_{2}. \nonumber
\end{align}
If $\det{A_{l}(\boldsymbol{\alpha }_{p})}=0$, then 
$$
A_{l}(\boldsymbol{\alpha }_{p})^{m}
   =
   (\tr{A_{l}(\boldsymbol{\alpha }_{p})})^{m-1}A_{l}(\boldsymbol{\alpha }_{p}). 
$$
Finally, by (\ref{eq:exp formula of A})
\begin{align}
A_{l}(\boldsymbol{\alpha }_{p})
   &=\begin{pmatrix}
K_{l}(\boldsymbol{\alpha }_{p}) & -b_{p+l-1}c_{p+l-1}K_{l-1}(\boldsymbol{\alpha }_{p}) \\
K_{l-1}(\boldsymbol{\alpha }_{p+1}) & -b_{p+l-1}c_{p+l-1}K_{l-2}(\boldsymbol{\alpha }_{p+1})
\end{pmatrix} \nonumber \\
   &=\begin{pmatrix}
K_{l}(\boldsymbol{\alpha }_{p}) & -b_{p-1}c_{p-1}K_{l-1}(\boldsymbol{\alpha }_{p}) \\
K_{l-1}(\boldsymbol{\alpha }_{p+1}) & -b_{p-1}c_{p-1}K_{l-2}(\boldsymbol{\alpha }_{p+1})
\end{pmatrix}. \nonumber 
\end{align}
By comparing the entries of the vector $\mathbf{k}_{lm}(\boldsymbol{\alpha }_{p})$, we obtain the conclusion.  
\end{proof}
Our main result follows from this theorem immediately. 
\begin{thm}
\label{thm:main results cor}
For $j=-1,0,1,\ldots ,l-2$, we have 
\begin{align}
& K_{lm+j}(\boldsymbol{\alpha }_{p-j}) \nonumber \\
   & \quad =
   K_{j}(\boldsymbol{\alpha }_{p-j})K_{lm}(\boldsymbol{\alpha }_{p})-b_{p-1}c_{p-1}K_{j-1}(\boldsymbol{\alpha }_{p-j})K_{lm-1}(\boldsymbol{\alpha }_{p+1}) \nonumber \\
   & \quad =
   \begin{cases}
   (\det{A_{l}(\boldsymbol{\alpha }_{p})})^{\frac{m-1}{2}}U_{m-1}\left(\frac{\tr{A_{l}(\boldsymbol{\alpha }_{p})}}{2\sqrt{\det{A_{l}(\boldsymbol{\alpha }_{p})}}}\right) & \nonumber \\
   \quad \cdot (K_{j}(\boldsymbol{\alpha }_{p-j})K_{l}(\boldsymbol{\alpha }_{p})-b_{p-1}c_{p-1}K_{j-1}(\boldsymbol{\alpha }_{p-j})K_{l-1}(\boldsymbol{\alpha }_{p+1})) & \nonumber \\
   \quad -(\det{A_{l}(\boldsymbol{\alpha }_{p})})^{\frac{m}{2}}U_{m-2}\left(\frac{\tr{A_{l}(\boldsymbol{\alpha }_{p})}}{2\sqrt{\det{A_{l}(\boldsymbol{\alpha }_{p})}}}\right)K_{j}(\boldsymbol{\alpha }_{p-j}) & (\det{A_{l}(\boldsymbol{\alpha }_{p})}\not=0) \\
   (\tr{A_{l}(\boldsymbol{\alpha }_{p})})^{m-1} & \\
   \quad \cdot (K_{j}(\boldsymbol{\alpha }_{p-j})K_{l}(\boldsymbol{\alpha }_{p})-b_{p-1}c_{p-1}K_{j-1}(\boldsymbol{\alpha }_{p-j})K_{l-1}(\boldsymbol{\alpha }_{p+1})) & (\det{A_{l}(\boldsymbol{\alpha }_{p})}=0)
   \end{cases}.\\
\label{eq:main results 2}
\end{align}
Here we define $K_{-2}(\boldsymbol{\alpha }_{p+1})$ by 
$$
-b_{p-1}c_{p-1}K_{-2}(\boldsymbol{\alpha }_{p+1})
   :=
   K_{0}(\boldsymbol{\alpha }_{p-1})
   =1. 
$$ 
\end{thm}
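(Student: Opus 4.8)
The plan is to recognise that the first equality in (\ref{eq:main results 2}) is nothing but the shift relation (\ref{eq:n shift op}) read off componentwise, and that the second equality then follows by inserting the closed forms for $K_{lm}(\boldsymbol{\alpha}_p)$ and $K_{lm-1}(\boldsymbol{\alpha}_{p+1})$ supplied by Theorem \ref{thm:main results}. No new determinant computation is needed; everything reduces to one application of the $2\times2$ transfer matrix together with bilinearity.

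First I would apply (\ref{eq:n shift op}) with base point $\boldsymbol{\alpha}_{p-j}$ and shift $j$. Since $(p-j)+j=p$, this yields, for $m\geq 1$ and $0\leq j\leq l-2$,
\begin{align}
\mathbf{k}_{lm+j}(\boldsymbol{\alpha}_{p-j})
   =
   A_j(\boldsymbol{\alpha}_{p-j})\,\mathbf{k}_{lm}(\boldsymbol{\alpha}_p), \nonumber
\end{align}
the hypothesis $n\geq m$ of (\ref{eq:n shift op}) becoming $lm+j-1\geq j$, i.e.\ $lm\geq 1$. Reading off the top entry and using the explicit form (\ref{eq:exp formula of A}) of $A_j(\boldsymbol{\alpha}_{p-j})$, whose first row is $\bigl(K_j(\boldsymbol{\alpha}_{p-j}),\,-b_{p-1}c_{p-1}K_{j-1}(\boldsymbol{\alpha}_{p-j})\bigr)$ because the relevant weight is $b_{(p-j)+j-1}c_{(p-j)+j-1}=b_{p-1}c_{p-1}$, gives at once the first equality of (\ref{eq:main results 2}).

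Next I would substitute the two formulas of Theorem \ref{thm:main results}. Writing $D:=\det A_l(\boldsymbol{\alpha}_p)$ and $\tau:=\tr A_l(\boldsymbol{\alpha}_p)/(2\sqrt D)$ in the case $D\neq 0$, the factor proportional to $U_{m-1}(\tau)$ pulls out of the combination $K_j K_{lm}-b_{p-1}c_{p-1}K_{j-1}K_{lm-1}$, leaving exactly $D^{(m-1)/2}U_{m-1}(\tau)\bigl(K_j(\boldsymbol{\alpha}_{p-j})K_l(\boldsymbol{\alpha}_p)-b_{p-1}c_{p-1}K_{j-1}(\boldsymbol{\alpha}_{p-j})K_{l-1}(\boldsymbol{\alpha}_{p+1})\bigr)$, while the $U_{m-2}(\tau)$ piece contributes only $-D^{m/2}U_{m-2}(\tau)K_j(\boldsymbol{\alpha}_{p-j})$ (the formula for $K_{lm-1}$ carries no $U_{m-2}$ term, so it does not interfere). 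The degenerate case $D=0$ is identical with $(\tr A_l(\boldsymbol{\alpha}_p))^{m-1}$ in place of the Chebyshev factor. This produces the second equality.

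The only points needing care, and the mild obstacle, are the boundary indices. For $j=-1$ the shift amount is negative and $A_{-1}$ is not a genuine product, so I would treat it separately: the claimed identity then reads $K_{lm-1}(\boldsymbol{\alpha}_{p+1})=K_{-1}(\boldsymbol{\alpha}_{p+1})K_{lm}(\boldsymbol{\alpha}_p)-b_{p-1}c_{p-1}K_{-2}(\boldsymbol{\alpha}_{p+1})K_{lm-1}(\boldsymbol{\alpha}_{p+1})$, which is a tautology once $K_{-1}=0$ and the stated convention $-b_{p-1}c_{p-1}K_{-2}(\boldsymbol{\alpha}_{p+1})=1$ are used, and whose second form is precisely the $K_{lm-1}$ case of Theorem \ref{thm:main results}; the value $j=0$ likewise just reproduces the $K_{lm}$ case. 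Finally, since (\ref{eq:n shift op}) was invoked under $lm\geq 1$, the case $m=0$ must be checked by hand, but there $K_{lm}=K_0=1$ and $K_{lm-1}=K_{-1}=0$, so both equalities collapse to the trivial identity $K_j(\boldsymbol{\alpha}_{p-j})=K_j(\boldsymbol{\alpha}_{p-j})$.
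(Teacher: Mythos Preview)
Your argument is correct and follows essentially the same route as the paper: apply the shift relation (\ref{eq:n shift op}) in the form $\mathbf{k}_{lm+j}(\boldsymbol{\alpha}_{p-j})=A_j(\boldsymbol{\alpha}_{p-j})\mathbf{k}_{lm}(\boldsymbol{\alpha}_p)$, read off the top entry via (\ref{eq:exp formula of A}), and substitute the formulas of Theorem~\ref{thm:main results}. Your treatment is in fact more careful than the paper's, which does not explicitly address the boundary values $j=-1$, $j=0$, or $m=0$.
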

\begin{proof}
From (\ref{eq:n shift op}) and (\ref{eq:exp formula of A}), we have
\begin{align}
\mathbf{k}_{lm+j}(\boldsymbol{\alpha }_{p-j})
   &=
   A_{j}(\boldsymbol{\alpha }_{p-j})
   \mathbf{k}_{lm}(\boldsymbol{\alpha }_{p}) \nonumber \\
\label{eq:the above}
   &=
   \begin{pmatrix}
   K_{j}(\boldsymbol{\alpha }_{p-j}) & -b_{p-1}c_{p-1}K_{j-1}(\boldsymbol{\alpha }_{p-j}) \\
   K_{j-1}(\boldsymbol{\alpha }_{p-j+1}) & -b_{p-1}c_{p-1}K_{j-2}(\boldsymbol{\alpha }_{p-j+1})
   \end{pmatrix}
   \begin{pmatrix}
   K_{lm}(\boldsymbol{\alpha }_{p}) \\
   K_{lm-1}(\boldsymbol{\alpha }_{p+1})
   \end{pmatrix}. 
\end{align}
By (\ref{eq:main results}) and comparing the entires of (\ref{eq:the above}), we obtain our main result (\ref{eq:main results 2}). 
\end{proof}

\section{Examples}
In this section, we give the examples of (\ref{eq:main results}) for $l=1,2,3$ explicitly. 

\subsection{$l=1$}
In this subsection, we put
$$
a:=a_{p}=a_{p+1}, \quad 
b:=b_{p}=b_{p+1}, \quad 
c:=c_{p}=c_{p+1}.
$$
In the case of $l=1$, since  
$$
\tr{A_{1}(\boldsymbol{\alpha }_{p})}
   =a, \quad 
\det{A_{1}(\boldsymbol{\alpha }_{p})}
   =bc, 
$$
we have the following well-known result:
\begin{align}
\label{eq:l1}
K_{m}(\boldsymbol{\alpha }_{p})
   =
   \begin{cases}
   (bc)^{\frac{m-1}{2}}U_{m-1}\left(\frac{a}{2\sqrt{bc}}\right) & (bc\not=0) \\
   a^{m-1} & (bc=0)
   \end{cases}.
\end{align}

\subsection{$l=2$}
In this subsection, we put
\begin{align}
& a_{1}:=a_{2m+1}=a_{2m+3}, \quad 
b_{1}:=b_{2m+1}=b_{2m+3}, \quad 
c_{1}:=c_{2m+1}=c_{2m+3}, \nonumber \\
& a_{2}:=a_{2m}=a_{2m+2}, \quad 
b_{2}:=b_{2m}=b_{2m+2}, \quad 
c_{2}:=c_{2m}=c_{2m+2}. \nonumber
\end{align}
Since 
$$
A_{2}(\boldsymbol{\alpha }_{p})
   =
   \begin{pmatrix}
   a_{1}a_{2}-b_{p}c_{p} & -a_{p}b_{p+1}c_{p+1} \\
   a_{p+1} & -b_{p+1}c_{p+1}
   \end{pmatrix}
$$
and
\begin{align}
\tr{A_{2}(\boldsymbol{\alpha }_{p})}
   &=
   a_{1}a_{2}-b_{1}c_{1}-b_{2}c_{2}, \quad 
\det{A_{1}(\boldsymbol{\alpha }_{p})}
   =b_{1}c_{1}b_{2}c_{2}, \nonumber 
\end{align}
our main result (\ref{eq:main results 2}) is  
\begin{align}
\label{eq:l2 example}
K_{2m}(\boldsymbol{\alpha }_{p})
   &=
   \begin{cases}
   (b_{1}c_{1}b_{2}c_{2})^{\frac{m-1}{2}}U_{m-1}\left(\frac{a_{1}a_{2}-b_{1}c_{1}-b_{2}c_{2}}{2\sqrt{b_{1}c_{1}b_{2}c_{2}}}\right)(a_{1}a_{2}-b_{p}c_{p}) & \nonumber \\
   \quad -(b_{1}c_{1}b_{2}c_{2})^{\frac{m}{2}}U_{m-2}\left(\frac{a_{1}a_{2}-b_{1}c_{1}-b_{2}c_{2}}{2\sqrt{b_{1}c_{1}b_{2}c_{2}}}\right) & (b_{1}c_{1}b_{2}c_{2}\not=0) \\
   (a_{1}a_{2}-b_{1}c_{1}-b_{2}c_{2})^{m-1}(a_{1}a_{2}-b_{p}c_{p}) & (b_{1}c_{1}b_{2}c_{2}=0)
   \end{cases}, \\
K_{2m-1}(\boldsymbol{\alpha }_{p+1})
   &=
   \begin{cases}
   (b_{1}c_{1}b_{2}c_{2})^{\frac{m-1}{2}}U_{m-1}\left(\frac{a_{1}a_{2}-b_{1}c_{1}-b_{2}c_{2}}{2\sqrt{b_{1}c_{1}b_{2}c_{2}}}\right)a_{p+1} & (b_{1}c_{1}b_{2}c_{2}\not=0) \\
   (a_{1}a_{2}-b_{1}c_{1}-b_{2}c_{2})^{m-1}a_{p+1} & (b_{1}c_{1}b_{2}c_{2}=0)
   \end{cases}.
\end{align}
\begin{exa}[A $q$-analogue of Fibonacci numbers]
Morier-Genoud and Ovsienko \cite{MGO} introduced the following notion of $q$-deformed rational numbers and continued fractions, motivated by Jones polynomials of rational knots or $F$-polynomials of a cluster algebra. 
For a positive rational number $\frac{r}{s}$ and its (regular) continued fraction
$$
\frac{r}{s}
   =
   a_{1} + \underset{i=1}{\overset{2n-1}{\mathrm K}} \frac{1}{a_{i+1}}, \quad a_{1},\ldots ,a_{2n}>0,
$$
their $q$-analogue are defined by 
$$
\left[\frac{r}{s}\right]_{q}
   :=
   [a_{1}]_{q} + \underset{i=1}{\overset{2n-1}{\mathrm K}} \frac{q^{(-1)^{i-1}a_{i}}}{[a_{i+1}]_{q^{(-1)^{i}}}},
$$
where $q\not=0$ is a complex parameter and 
$$
[a]_{q}:=\frac{1-q^{a}}{1-q}.
$$
Our formulas (\ref{eq:main results 2}) or (\ref{eq:l2 example}) are useful to write down these $q$-analogue explicitly. 
We consider the case of 
\begin{equation}
\label{eq:l2 specialization}
a_{p}=[a]_{q^{(-1)^{p-1}}}, \quad 
b_{p}=q^{(-1)^{p-1}a}, \quad  
c_{p}=-1,
\end{equation}
where $a$ is a positive integer. 
By substituting (\ref{eq:l2 specialization}) in (\ref{eq:l2 example}), we have
\begin{align}
K_{2m}(\boldsymbol{\alpha }_{p})
   &=
   U_{m-1}\left(\frac{[a]_{q}[a]_{q^{-1}}+q^{a}+q^{-a}}{2}\right)([a]_{q}[a]_{q^{-1}}+q^{(-1)^{p-1}a}) \nonumber \\
   & \quad -U_{m-2}\left(\frac{[a]_{q}[a]_{q^{-1}}+q^{a}+q^{-a}}{2}\right), \nonumber \\
K_{2m-1}(\boldsymbol{\alpha }_{p+1})
   &=
   U_{m-1}\left(\frac{[a]_{q}[a]_{q^{-1}}+q^{a}+q^{-a}}{2}\right)[a]_{q^{(-1)^{p}}}.  
\end{align}
From (\ref{eq:cont frac and continuant}), we derive an explicit formula 
\begin{align}
& [a]_{q} + \underset{i=1}{\overset{2n-1}{\mathrm K}} \frac{q^{(-1)^{i-1}a}}{[a]_{q^{(-1)^{i}}}} \nonumber \\
   &=
   \frac{U_{m-1}\left(\frac{[a]_{q}[a]_{q^{-1}}+q^{a}+q^{-a}}{2}\right)([a]_{q}[a]_{q^{-1}}+q^{(-1)^{p-1}a})-U_{m-2}\left(\frac{[a]_{q}[a]_{q^{-1}}+q^{a}+q^{-a}}{2}\right)}{U_{m-1}\left(\frac{[a]_{q}[a]_{q^{-1}}+q^{a}+q^{-a}}{2}\right)[a]_{q^{(-1)^{p}}}}. 
\end{align}

In particular, The case of $a=1$ is a $q$-analogue of Fibonacci numbers defined by 
$$
F_{1}=1, \quad F_{2}=1, \quad F_{n+2}=F_{n+1}+F_{n}.
$$
As is well known, the continued fraction of $\frac{F_{2n+1}}{F_{2n}}$ is given by 
$$
\frac{F_{2n+1}}{F_{2n}}
   =
      1 + \underset{i=1}{\overset{2n-1}{\mathrm K}} \frac{1}{1}.
$$
Thus, a $q$-analogue of this rational number and continued fraction expansion are equal to 
$$
\left[\frac{F_{2n+1}}{F_{2n}}\right]_{q}
   =
      1 + \underset{i=1}{\overset{2n-1}{\mathrm K}} \frac{q^{(-1)^{i-1}}}{1}.
$$
We put
\begin{align}
& a_{1}=a_{2}=1, \quad 
b_{i}=q^{(-1)^{i-1}}, \quad 
c_{1}=c_{2}=-1 \nonumber
\end{align}
and
$$
F_{2m+2}(q)
   :=
   K_{2m+1}(\boldsymbol{\alpha }_{2}), \quad 
F_{2m+1}(q)
   :=
   K_{2m}(\boldsymbol{\alpha }_{1}).
$$
From Lemma \ref{thm:Key lemma} {\rm{(4)}}, we have
$$
\left[\frac{F_{2n+1}}{F_{2n}}\right]_{q}
   =
   \frac{F_{2n+1}(q)}{F_{2n}(q)}.
$$
This sequence $\{F_{n}(q)\}$ satisfies 
$$
F_{1}(q)=1, \quad F_{2}(q)=1, \quad F_{3}(q)=1+q 
$$
and 
$$
F_{2m}(q)=F_{2m-1}(q)+q^{-1}F_{2m-2}(q), \quad 
F_{2m+1}(q)=F_{2m}(q)+qF_{2m-1}(q).
$$
From (\ref{eq:l2 example}), $\{F_{n}(q)\}$ has the following explicit expression: 
\begin{align}
F_{2m+2}(q)
   &=
   U_{m-1}\left(\frac{1+q+q^{-1}}{2}\right)
   (1+q^{-1})
   -U_{m-2}\left(\frac{1+q+q^{-1}}{2}\right) \nonumber \\
F_{2m+2}(q)
   &=
   U_{m-1}\left(\frac{1+q+q^{-1}}{2}\right). 
\end{align}
\end{exa}

\subsection{$l=3$}
Put 
\begin{align}
& a_{1}:=a_{3m+1}=a_{3m+4}, \quad 
b_{1}:=b_{3m+1}=b_{3m+4}, \quad 
c_{1}:=c_{3m+1}=c_{3m+4}, \nonumber \\
& a_{2}:=a_{3m+2}=a_{3m+5}, \quad 
b_{2}:=b_{3m+2}=b_{3m+5}, \quad 
c_{2}:=c_{3m+2}=c_{3m+5}, \nonumber \\
& a_{3}:=a_{3m}=a_{3m+3}, \quad 
b_{3}:=b_{3m}=b_{3m+3}, \quad 
c_{3}:=c_{3m}=c_{3m+3}. \nonumber
\end{align}
From (\ref{eq:exp formula of A}) and (\ref{eq:tr and det}), we have
$$
A_{3}(\boldsymbol{\alpha }_{p})
   =
   \begin{pmatrix}
   a_{1}a_{2}a_{3}-a_{p+2}b_{p}c_{p}-a_{p}b_{p+1}c_{p+1} & -a_{p}a_{p+1}b_{p+2}c_{p+2}+b_{p}c_{p}b_{p+2}c_{p+2} \\
   a_{p+1}a_{p+2}-b_{p+1}c_{p+1} & -a_{p+1}b_{p+2}c_{p+2}
   \end{pmatrix}
$$
and
\begin{align}
\tr{A_{3}(\boldsymbol{\alpha }_{p})}
   &=
   a_{1}a_{2}a_{3}-a_{1}b_{2}c_{2}-a_{2}b_{3}c_{3}-a_{3}b_{1}c_{1}, \quad 
\det{A_{1}(\boldsymbol{\alpha }_{p})}
   =\prod_{j=1}^{3}b_{j}c_{j}. \nonumber 
\end{align}
Then (\ref{eq:main results 2}) can be written as 
\begin{align}
& K_{3m+1}(\boldsymbol{\alpha }_{p-1}) \nonumber \\
   & \quad =
   a_{p-1}K_{3m}(\boldsymbol{\alpha }_{p})-b_{p-1}c_{p-1}K_{3m-1}(\boldsymbol{\alpha }_{p+1}) \nonumber \\
   & \quad =
   \begin{cases}
   \prod_{j=1}^{3}(b_{j}c_{j})^{\frac{m-1}{2}}U_{m-1}\left(\frac{a_{1}a_{2}a_{3}-a_{1}b_{2}c_{2}-a_{2}b_{3}c_{3}-a_{3}b_{1}c_{1}}{2\sqrt{b_{1}c_{1}b_{2}c_{2}b_{3}c_{3}}}\right) & \nonumber \\
   \quad \cdot 
(a_{p-1}(a_{1}a_{2}a_{3}-a_{p+2}b_{p}c_{p}-a_{p}b_{p+1}c_{p+1}) & \nonumber \\
   \quad \quad -b_{p-1}c_{p-1}(a_{p+1}a_{p+2}-b_{p+1}c_{p+1})) & \nonumber \\
   \quad -\prod_{j=1}^{3}(b_{j}c_{j})^{\frac{m}{2}}U_{m-2}\left(\frac{a_{1}a_{2}a_{3}-a_{1}b_{2}c_{2}-a_{2}b_{3}c_{3}-a_{3}b_{1}c_{1}}{2\sqrt{b_{1}c_{1}b_{2}c_{2}b_{3}c_{3}}}\right)a_{p-1} & (\prod_{j=1}^{3}b_{j}c_{j}\not=0) \\
   (a_{1}a_{2}a_{3}-a_{1}b_{2}c_{2}-a_{2}b_{3}c_{3}-a_{3}b_{1}c_{1})^{m-1} & \nonumber \\
   \quad \cdot 
(a_{p-1}(a_{1}a_{2}a_{3}-a_{p+2}b_{p}c_{p}-a_{p}b_{p+1}c_{p+1}) & \nonumber \\
   \quad \quad -b_{p-1}c_{p-1}(a_{p+1}a_{p+2}-b_{p+1}c_{p+1}))
   & (\prod_{j=1}^{3}b_{j}c_{j}=0)
   \end{cases}, \nonumber \\
& K_{3m}(\boldsymbol{\alpha }_{p}) \nonumber \\
   & \quad =
   \begin{cases}
   \prod_{j=1}^{3}(b_{j}c_{j})^{\frac{m-1}{2}}U_{m-1}\left(\frac{a_{1}a_{2}a_{3}-a_{1}b_{2}c_{2}-a_{2}b_{3}c_{3}-a_{3}b_{1}c_{1}}{2\sqrt{b_{1}c_{1}b_{2}c_{2}b_{3}c_{3}}}\right) & \nonumber \\
   \quad \cdot 
   (a_{1}a_{2}a_{3}-a_{p+2}b_{p}c_{p}-a_{p}b_{p+1}c_{p+1}) & \nonumber \\
   \quad -\prod_{j=1}^{3}(b_{j}c_{j})^{\frac{m}{2}}U_{m-2}\left(\frac{a_{1}a_{2}a_{3}-a_{1}b_{2}c_{2}-a_{2}b_{3}c_{3}-a_{3}b_{1}c_{1}}{2\sqrt{b_{1}c_{1}b_{2}c_{2}b_{3}c_{3}}}\right) & (\prod_{j=1}^{3}b_{j}c_{j}\not=0) \\
   (a_{1}a_{2}a_{3}-a_{1}b_{2}c_{2}-a_{2}b_{3}c_{3}-a_{3}b_{1}c_{1})^{m-1} & \nonumber \\
   \quad \cdot 
   (a_{1}a_{2}a_{3}-a_{p+2}b_{p}c_{p}-a_{p}b_{p+1}c_{p+1}) & (\prod_{j=1}^{3}b_{j}c_{j}=0)
   \end{cases}, \\
& K_{3m-1}(\boldsymbol{\alpha }_{p+1}) \nonumber \\
   & \quad =
   \begin{cases}
   \prod_{j=1}^{3}(b_{j}c_{j})^{\frac{m-1}{2}}U_{m-1}\left(\frac{a_{1}a_{2}a_{3}-a_{1}b_{2}c_{2}-a_{2}b_{3}c_{3}-a_{3}b_{1}c_{1}}{2\sqrt{b_{1}c_{1}b_{2}c_{2}b_{3}c_{3}}}\right) & \nonumber \\
   \quad \cdot (a_{p+1}a_{p+2}-b_{p+1}c_{p+1}) & (\prod_{j=1}^{3}b_{j}c_{j}\not=0) \\
   (a_{1}a_{2}a_{3}-a_{1}b_{2}c_{2}-a_{2}b_{3}c_{3}-a_{3}b_{1}c_{1})^{m-1} \nonumber \\
   \quad \cdot (a_{p+1}a_{p+2}-b_{p+1}c_{p+1}) & (\prod_{j=1}^{3}b_{j}c_{j}=0)
   \end{cases}. \\
\end{align}

\bibliographystyle{amsplain}

\noindent 
Department of Mathematics, Graduate School of Science, Kobe University, \\
1-1, Rokkodai, Nada-ku, Kobe, 657-8501, JAPAN\\
E-mail: g-shibukawa@math.kobe-u.ac.jp
\end{document}